\title{Embedded Picard-Vessiot extensions}
\date{\today}
\author{Quentin Brouette\\University of Mons \and   Greg Cousins\thanks{Partially supported by NSF grants DMS-1360702 and DMS 1665035}\\University of Notre Dame \and Anand Pillay\thanks{Partially supported by NSF grants DMS-1360702 and DMS-1665035}\\University of Notre Dame  \and Francoise Point \\University of Mons}
\newtheorem{Theorem}{Theorem}[section]
\newtheorem{Definition}[Theorem]{Definition}
\newtheorem{Remark}[Theorem]{Remark}
\newtheorem{Lemma}[Theorem]{Lemma}
\newtheorem{Corollary}[Theorem]{Corollary}
\newtheorem{Fact}[Theorem]{Fact}
\newcommand{\Q}{\mathbb Q}
\begin{document}
\maketitle

\begin{abstract}  We prove that if $T$ is a theory of large, bounded, fields of characteristic $0$ with almost quantifier elimination,  and $T_{D}$ is the model companion of $T\cup \{``\partial$ is a derivation"\}, then for any model $({\cal U},\partial)$ of $T_{D}$, differential subfield $K$ of ${\cal U}$ such that $C_{K}\models T$, and linear differential equation $\partial Y = AY$ over $K$, there is a Picard-Vessiot extension $L$ of $K$ for the equation with $K\leq L \leq {\cal U}$, i.e. $L$ can be embedded in $\cal U$ over $K$, as a differential field.   Moreover such $L$ is unique to isomorphism over $K$ as a differential field. Likewise for the analogue for strongly normal extensions for logarithmic differential equations in the sense of Kolchin. 
\end{abstract}

\section{Introduction and preliminaries}
Recent papers such as \cite{CHvdP} and \cite{KP} have shown that under certain conditions (on the differential field $(K,\partial)$ and its field $C_{K}$ of constants), given a linear differential equation over  $(K,\partial)$ we can find a Picard-Vessiot extension $(L,\partial)$ of $(K,\partial)$ for the equation such that  $C_{K}$ is existentially closed in $L$ (as a field).  Among the motivating examples to which this applies is the case where $C_{K}$ is real closed and $K$ is formally real.  Now there is a certain complete first order theory $CODF$, which is the model companion of the theory of formally real fields equipped with a derivation, whereby $(K,\partial)$ from the previous sentence, will be embedded in a model $({\cal U},\partial)$ of $CODF$. And it is natural to ask whether for any such model $({\cal U},\partial)$ the Picard-Vessiot extensions of $K$ can be found {\em inside} ${\cal U}$ (over $K$)? 
In this paper we prove a general result, namely the result stated in the abstract, which will yield a positive answer.  

In the case where the theory $T$ in the abstract is $ACF_{0}$,  $C_{K}$ is algebraically closed, $\cal U$ will be differentially closed, and the result (that $L$ can be found inside $\cal U$ over $K$) is well-known. The model theoretic account goes via prime models as follows: The prime model $K^{diff}$ of $K$ embeds  in $\cal U$ over $K$, has no new constants, and the linear differential equation has a fundamental system of solutions in $K^{diff}$ as the latter is differentially closed.  In the more general situations such as when $\cal U$ is a model of $CODF$, this approach has no chance of working, as there are no prime models (see \cite{Singer} and also \cite{Point} which adapts Singer's argument to other contexts).  But as  it turns out we are able to  combine the relatively hard abstract existence statements from  \cite{KP} with some relatively soft model theory to obtain the embedded existence statements and this is the content of the current paper.  See also Lemma 4.4 of \cite{BP} and the paragraph following it which discuss related issues. 

In the remainder of this section we give the necessary definitions and background.  Both the model theory and differential algebra in the paper are fairly basic.  
As a rule, $\cal L$ will denote the language of unitary rings, possibly with some additional constant symbols,  and ${\cal L}_{\partial}$ will be $\cal L$ together with a unary function symbol $\partial$.   In general variables $x,y$  range over finite tuples. 

In this paper we will only be concerned with fields (and differential fields) of characteristic $0$, although many notions will make sense in general.

\begin{Definition} \begin{enumerate}
\item A field  $K$ is said to be {\em bounded} if for each $n$, $K$ has only finitely many extensions of degree $n$. This is also known as Serre's property (F).
\item 
 A field $K$ is said to be {\em large} (or {\em ample}) if for any  algebraic variety $V$ which is defined over $K$, is $K$-irreducible, and has a nonsingular $K$-rational point, $V(K)$ is Zariski dense in $V$.
\end{enumerate}
\end{Definition}

\begin{Remark} \begin{enumerate} 
\item  Large fields were introduced by Pop in \cite{Pop} where one can find other characterizations. 
\item 
The class of large fields is elementary in the language $\cal L$.
\item 
 If $V$ is a $K$-irreducible variety over $K$ with a nonsingular $K$-point then $V$ is absolutely irreducble.
\item
 Boundedness of a field is preserved under elementary equivalence in the language $\cal L$.
\item Suppose $k$ is a field, and $V$ is a $k$-irreducible variety over $k$. Then $k$ is existentially closed in the function field $k(V)$ of $k$ iff $V(k)$ is Zariski-dense in $V$. 
\end{enumerate}
\end{Remark}
\noindent
{\em Explanation.}   2 is Remark 1.3 of \cite{Pop}.  For 3,  note that a nonsingular point on a variety $V$ lies on exactly one (absolutely) irreducible component. So if $V$ is defined over $K$ and $a\in V(K)$ is smooth then the (absolutely)  irreducible component of $V$ on which $a$ lies is defined over $K$, which suffices. 4 is folklore and 5 is a tautology.

\begin{Definition} Let $T$ be a theory of fields in the language $\cal L$. We say that $T$ has {\em almost quantifier-elimination} if whenever $K\models T$, $A\subseteq K$ is relatively algebraically closed in $K$ ( in the field-theoretic sense), $\bar a$ is an enumeration of $A$, and $p(\bar x)$ is the quantifier-free type of $\bar a$, then $T\cup p(\bar x)$ determines a complete type. 
\end{Definition}

\begin{Remark} \begin{enumerate}
\item  Clearly ``$T$ has QE"  implies ``$T$ has almost $QE$"  implies ``$T$ is model-complete".
\item
It is routine to prove that $T$ has almost quantifier elimination if and only if every formula $\phi(x)$ is equivalent modulo $T$ to a formula of the form $\exists y(\psi(x,y))$ where $\psi(x,y)$ is positive quantifier-free and $ACF$ implies that the projection map from $(x,y) \to x$ is finite-to-one on solutions of $\psi(x,y)$. 
\end{enumerate}
\end{Remark}

It is important to note that the bulk of the ``nice"  theories of fields from the point of view of logic, have almost quantifier elimination as well as the property that all models are large and bounded: this is the case for example for $RCF$ (in the field language), $Th(\Q_{p})$, and the theory of pseudofinite fields (with additional constants/parameters).  

Recall that if $T$ and $T'$ are theories in a given language, $T'$ is said to be a model companion of $T$ if $T'$ is model complete and $T$ and $T'$ have the same universal consequences, equivalently if the models of $T'$ are precisely the existentially closed models of $T_{\forall}$.

We now pass to differential fields, which we view as structures 
\newline
$(K,+,\times, -, 0, 1,\partial)$ in the language 
${\cal L}_{\partial}$ where $(K,+,\times)$ is a field and $\partial:K \to K$ is a derivation.  The theory of differential fields has a model companion $DCF_{0}$ which moreover has quantifier elimination.  We can extend the notion of almost quantifier elimination to  differential fields as follows:

\begin{Definition} Let $T$ be a theory  of differential fields in language ${\cal L}_{\partial}$. We say that $T$ has almost quantifier elimination if,  whenever $(K,\partial)$ is a model of  $T$, $A$ is a subfield which is both closed under $\partial$ as well as being relatively algebraically closed in $K$ as a field, and the tuple ${\bar a}$ enumerates $A$ and $p({\bar x}) = qftp({\bar a})$ (the quantifier-free type of $\bar a$) then $T\cup p({\bar x})$ axiomatizes a complete type. 

\end{Definition} 

Remember that, for an irreducible affine variety $V$ over a differential field $(K,\partial)$, the variety $T_{\partial}(V)$ is the variety defined by equations:
$P(x_{1},\ldots,x_{n}) = 0$ and $\Sigma_{i=1,\ldots,n} (\partial P/\partial x_{i})u_{i} + P^{\partial}$ for $P(x_{1},\ldots,x_{n})$ in $I_{K}(V)$, where $P^{\partial}$ is the result of applying the derivation to the coefficients of $P$. 

One of the reasons for the importance of the property of largeness in the current paper is Tressl's uniform model companion for $T\cup\{``\partial$ is a derivation"\} when $T$ is a model-complete theory of large fields, see \cite{Tressl}.  In this  paper of Tressl,  which dealt with the general case of several commuting derivations, the axioms were rather complicated. In the case of a single derivation we can modify slightly the  ``geometric axioms" for $DCF_{0}$ to obtain a more accessible account. (Similar things were done  in \cite{MR} for $CODF$ and more generally   in \cite{Guzy-Point} for differential topological fields.)

\begin{Lemma} Let $T$ be a model complete theory of large fields. Then $T\cup\{``\partial$ is a derivation"\} has a model companion which we call $T_{D}$. Moreover $T_{D}$ can be axiomatized by $T$ together with the following schema:  whenever $V$ is an irreducible affine variety over $K$ with a nonsingular $K$-point, $s:V\to T_{\partial}(V)$ is a $K$-rational section of the natural projection, and $U$ is a Zariski open subset of $V$ defined over $K$  then there is $a\in U(K)$ such that $s(a) = \partial(a)$.
\end{Lemma}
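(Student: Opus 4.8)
The plan is to show that $T_{D}$ — taken to be $T_{0}$, the theory $T$ together with the axioms saying $\partial$ is a derivation, extended by the displayed schema — is the model companion of $T_{0}$. This reduces to three points: $T_{D}\supseteq T_{0}$ (immediate), $T_{D}$ and $T_{0}$ have the same universal consequences, and $T_{D}$ is model complete.

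For the universal consequences it suffices to embed an arbitrary model $(F,\partial)\models T_{0}$ into a model of $T_{D}$, which I would do by a transfinite chain of models of $T_{0}$. Given the current model $(F_{\alpha},\partial)$ — with $F_{\alpha}\models T$, hence large — and an instance $(V,s,U)$ of the schema failing in $F_{\alpha}$, I pass to the function field $F_{\alpha}(V)$: since $V$ is $F_{\alpha}$-irreducible with a nonsingular $F_{\alpha}$-point it is absolutely irreducible (Remark 1.2(3)), and the section $s$ supplies exactly the data required to extend $\partial$ to a derivation of $F_{\alpha}(V)$ sending the generic point $\eta$ to $s(\eta)$, so the instance is witnessed there by $\eta\in U(F_{\alpha}(V))$. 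The key observation is that largeness of $F_{\alpha}$ together with the nonsingular $F_{\alpha}$-point of $V$ makes $V(F_{\alpha})$ Zariski-dense in $V$, so $F_{\alpha}$ is existentially closed in $F_{\alpha}(V)$ as a field (Remark 1.2(5)); hence $F_{\alpha}(V)\models T_{\forall}$, so it embeds into a model $F_{\alpha+1}\models T$, to which $\partial$ extends since we are in characteristic $0$. Existential witnesses persist upward and through unions of chains (these being elementary in the field language, as $T$ is model complete), so standard bookkeeping over all instances produces $F^{\ast}\models T_{D}$ with $F\subseteq F^{\ast}$.

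The heart of the matter is model completeness, which I would prove by Robinson's test. Let $(K,\partial)\subseteq (L,\partial)$ be models of $T_{D}$; both satisfy $T$, so $K\preceq L$ in the field language. Given an existential ${\cal L}_{\partial}$-formula $\exists\bar x\,\phi(\bar x,\bar c)$ with $\bar c\in K$ and $\phi$ quantifier-free of order $\le m$, witnessed by $\bar b\in L$, set $\bar\beta=(\bar b,\partial\bar b,\dots,\partial^{m}\bar b)$ and let $V_{0}\subseteq\mathbb{A}^{N}$ be the locus of $\bar\beta$ over $K$; replacing $\phi$ by a stronger condition we may assume its solution set contains a dense Zariski-open $O_{0}\subseteq V_{0}$ (a $K$-constructible set containing the generic point $\bar\beta$ of $V_{0}$ contains such an open). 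Two points must be arranged. First, $V_{0}$ must have a nonsingular $K$-point, and this is where $K\preceq L$ enters: $\bar\beta$ is a $K$-generic $L$-point of $V_{0}$, so $V_{0}(K)$ is Zariski-dense in $V_{0}$ and therefore meets the (nonempty, $K$-open) smooth locus. Second, I need a $K$-rational section $s\colon V_{0}\to T_{\partial}(V_{0})$ whose ``lower'' derivative-coordinates are the coordinate shifts $y_{l,i}\mapsto y_{l,i+1}$ for $i<m$, so that any point $\bar a$ with $\partial\bar a=s(\bar a)$ is forced to be the $m$-jet of its degree-$0$ part, while the ``top'' coordinates $h_{l}$ are allowed to be arbitrary elements of $K(V_{0})$; one writes the condition that $s$ land in $T_{\partial}(V_{0})$ as a linear system over $K(V_{0})$ in the unknowns $h_{l}$, observes that applying $\partial$ to the relations $P(\bar\beta)=0$ for $P\in I_{K}(V_{0})$ exhibits a solution over $K(\bar\beta,\partial^{m+1}\bar b)$, and concludes solvability over $K(V_{0})$ because solvability of a linear system is invariant under field extension. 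Applying the schema to $(V_{0},s,U)$ with $U=O_{0}\cap(\text{domain of }s)\cap V_{0}^{\mathrm{sm}}$ gives $\bar a\in U(K)$ with $\partial\bar a=s(\bar a)$; unwinding the shifts, $\bar a$ is the $m$-jet of some $\bar a_{0}\in K^{k}$, and $\bar a\in O_{0}$ forces $K\models\phi(\bar a_{0},\bar c)$, so $\exists\bar x\,\phi(\bar x,\bar c)$ is realized in $K$.

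I expect model completeness to be the main obstacle, and within it the two ingredients above: noticing that the auxiliary locus $V_{0}$ automatically acquires a nonsingular $K$-point because $K$ is an elementary subfield of $L$ — this is precisely what lets the section form of the axioms suffice, in place of Pierce-Pillay-style axioms allowing arbitrary subvarieties of $T_{\partial}(V)$ — and the linear-algebra trick producing a $K$-rational section with the prescribed generic behaviour even when $\partial^{m+1}\bar b\notin K(V_{0})$. The remaining pieces — extending derivations in characteristic $0$, persistence of existential formulas, the chain construction, and assembling the three properties into the model-companion conclusion — are routine.
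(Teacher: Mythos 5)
Your proof is correct, and it packages the argument differently from the paper. The paper proves the lemma by identifying the models of the schema $\Sigma$ with the existentially closed models of $T_{\forall}\cup\{``\partial$ is a derivation"$\}$: it gives details only for the direction ``existentially closed $\Rightarrow$ $\Sigma$'' (take a generic point $a$ of $V$, use $s$ to put a derivation on $K(a)$, use largeness plus the nonsingular $K$-point to embed $K(a)$ into an elementary field extension of $K$, and then invoke existential closedness), and it explicitly leaves the converse to the reader with a pointer to \cite{Pierce-Pillay}. You instead verify directly that $T\cup\Sigma$ is a cotheory of $T\cup\{``\partial$ is a derivation"$\}$ and is model complete. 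Your chain construction for the cotheory part uses exactly the same core step as the paper's detailed direction (derivation on $F_{\alpha}(V)$ via $s$, largeness to get $F_{\alpha}(V)\models T_{\forall}$), so that half is essentially a reorganization. The genuine added value is your Robinson's-test argument, which supplies in full the direction the paper omits: the observation that the prolongation locus $V_{0}$ of $(\bar b,\partial\bar b,\dots,\partial^{m}\bar b)$ acquires a nonsingular $K$-point because $K\preceq L$ in the field language (this is what makes the section-only, smooth-point-only form of the axioms sufficient), and the linear-algebra construction of a $K(V_{0})$-rational section extending the coordinate shifts. Both of these ingredients do appear in the paper, but only later, inside the proof of Lemma 2.3 (where the section is produced via Corollary 1.7 of \cite{Pierce-Pillay} and absolute irreducibility of the relevant loci is used in the same way); so your write-up can be read as an expanded, self-contained version of what the paper distributes between the sketch of Lemma 1.6 and the proof of Lemma 2.3.
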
 
\begin{proof} 
Strictly speaking we mean $s(a) = (a,\partial(a))$ but here and subsequently we may identify $s(a)$ with the second coordinate. 

We give a sketch proof of the lemma.  Let $\Sigma$ be the collection of axioms stated in the lemma.  We show that the models of $\Sigma$ are precisely the existentially closed models of $(T\cup\{``\partial$ is a derivation"$\})_{\forall} = T_{\forall} \cup \{``\partial$ is a derivation"\}.   Let $(K,\partial)$ be such an existentially closed model. Note first that $K$ must be a model of $T$, because $T$ is $\forall\exists$ axiomatizable and any derivation on a given field extends to a derivation on any larger field. 

 Let  $V$ be an irreducible $K$-variety with a nonsingular $K$-point, and $s: V \to T_{\partial}(V)$ a $K$ rational section of the projection.  Let $a$ be a generic point of $V$ over $K$ (in some ambient algebraically closed field containing $K$). Then as in \cite{Pierce-Pillay}, defining $\partial(a)$ to be $s(a)$ yields an extension of the derivation $\partial$  on $K$ to a derivation, also called $\partial$ of $K(a)$.  On the other hand, as $K$ is large, our assumptions imply that $K$ is existentially closed in $K(a)$ as fields, whereby for some field $L$ extending $K(a)$, $K \prec L$ as fields. Extend the derivation $\partial$ on $K(a)$ to a derivation $\partial$ on $L$. So $(L,\partial)$ is a model of $T\cup\{``\partial$ is a derivation"\}. As $(K,\partial)$ is existentially closed, for any Zariski open $U$ of $V$ over $K$ there is $a_{1}\in U(K)$ such that $s(a_{1}) = \partial(a_{1})$ as required. 

We leave it to the reader to show conversely that any model of $\Sigma$ is an existentially closed model of $T_{\forall} \cup \{``\partial$ is a derivation"\}.  (See \cite{Pierce-Pillay}.)

\end{proof}

A nice application of the axioms is the following:
\begin{Corollary} Let $T$ and $T_{D}$ be as in Lemma 1.6. Let  $(K,\partial)$ be a model of $T_{D}$, and $C_{K}$ its field of constants. Then $C_{K}$ is also a model of $T$ (hence an elementary substructure of $K$ as fields). 
\end{Corollary}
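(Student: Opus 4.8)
The plan is to prove that $C_{K}$ is existentially closed in $K$ and then to conclude by model completeness. For the last step I would use the standard fact that a model complete theory is $\forall\exists$-axiomatizable: once $C_{K}$ is known to be existentially closed in $K\models T$, then for any $\forall\exists$-axiom $\forall\bar x\,\exists\bar y\,\theta(\bar x,\bar y)$ of $T$ and any tuple $\bar a$ from $C_{K}$, the sentence $\exists\bar y\,\theta(\bar a,\bar y)$ holds in $K$, hence in $C_{K}$; so $C_{K}\models T$, and then $C_{K}\prec K$ as fields, again by model completeness.

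First I would record two preliminary observations. (i) $C_{K}$ is relatively algebraically closed in $K$ as a field: if $a\in K$ is algebraic over $C_{K}$ with minimal polynomial $m(X)\in C_{K}[X]$, then applying $\partial$ to $m(a)=0$ and using $\partial c=0$ for the coefficients $c$ yields $m'(a)\,\partial(a)=0$; as the characteristic is $0$ and $m$ is irreducible, $m'(a)\neq 0$, so $\partial(a)=0$, i.e. $a\in C_{K}$. (ii) Therefore $K$ is a regular extension of $C_{K}$ (relatively algebraically closed by (i), and separable since the characteristic is $0$), so for any $C_{K}$-irreducible affine variety $V$ the base change $V_{K}:=V\times_{C_{K}}K$ is irreducible over $K$ (equivalently, $K\otimes_{C_{K}}C_{K}[V]$ is a domain).

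Now for the main step, that $C_{K}$ is existentially closed in $K$. Let $\exists\bar y\,\psi(\bar y,\bar c)$ be an existential formula over $C_{K}$ that holds in $K$; after replacing $\psi$ by one of the disjuncts of a disjunctive normal form, I may assume $\psi$ is a conjunction of polynomial equations and inequations over $C_{K}$, and I fix a witness $\bar b\in K^{m}$. Let $V\subseteq\mathbb{A}^{m}$ be the Zariski closure of $\bar b$ over $C_{K}$, so that $V$ is $C_{K}$-irreducible with generic point $\bar b$. The solution set of $\psi(\bar y,\bar c)$ is a $C_{K}$-constructible set containing the generic point of $V$, hence it contains a nonempty Zariski open $U\subseteq V$ defined over $C_{K}$, and $\bar b\in U$. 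Passing to $K$: $V_{K}$ is irreducible over $K$ by (ii), and $\bar b$ is a nonsingular $K$-point of $V_{K}$, since $\bar b$, being a generic point, lies in the smooth locus of $V$ over $C_{K}$. Crucially, $V_{K}$ is cut out by polynomials with coefficients in $C_{K}$, so for such a polynomial $P$ one has $P^{\partial}=0$; hence $T_{\partial}(V_{K})$ is the ordinary tangent bundle of $V_{K}$, and the zero section $V_{K}\to T_{\partial}(V_{K})$ is a regular, in particular $K$-rational, section of the natural projection. Applying the axiom schema of Lemma 1.6 to $V_{K}$, this zero section, and the nonempty Zariski open $U_{K}:=U\times_{C_{K}}K\subseteq V_{K}$, I obtain $\bar a\in U_{K}(K)$ with $\partial(\bar a)=0$, i.e. $\bar a\in C_{K}^{m}$; since $U$ is defined over $C_{K}$, this forces $\bar a\in U(C_{K})$, and as $\psi$ holds throughout $U$ we get $C_{K}\models\psi(\bar a,\bar c)$, hence $C_{K}\models\exists\bar y\,\psi(\bar y,\bar c)$.

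The hard part is this middle step, and more precisely arranging the hypotheses of the geometric axiom: starting from an arbitrary existential formula, one must produce an affine variety that is irreducible over $K$ and has a nonsingular $K$-point, together with a rational section of the (shifted) tangent bundle that forces solutions to be constants. This is exactly where the regularity of $K/C_{K}$ is needed — to pass from $C_{K}$-irreducibility to $K$-irreducibility — and where the choice of the witness $\bar b$ as a generic point of its $C_{K}$-locus pays off: it makes $\bar b$ a nonsingular point of $V_{K}$ and it makes the constructible solution set engulf a full Zariski open neighbourhood of $\bar b$ in $V$, which is what lets the zero-section axiom produce the desired constant solution.
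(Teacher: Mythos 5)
Your proposal is correct and follows essentially the same route as the paper: reduce to showing $C_{K}$ is existentially closed in $K$ (using model completeness of $T$), take the $C_{K}$-locus of a witness tuple, note it is absolutely irreducible with the witness as a smooth generic point and that $T_{\partial}(V)$ is the ordinary tangent bundle, and apply the axiom schema of Lemma 1.6 with the zero section to produce a constant point in the relevant open set. You merely spell out a few details the paper leaves implicit (the relative algebraic closedness of $C_{K}$ in $K$ and the $\forall\exists$-axiomatizability step).
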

\begin{proof} It suffices to  show that $C_{K}$ is existentially closed in $K$ as a field.  Let $a$ be a tuple from $K$. We have to show any quantifier-free $L$-formula over $C_{K}$ which is true of $a$ is satisfied in $C_{K}$.   Let $V = V(a/C_{K})$ be the variety over $C_{K}$ whose generic point is $a$. As $C_{K}$ is relatively algebraically closed in $K$, $V$ is  absolutely irreducible.   By definition $a$ is a smooth point of $V$. On the other hand $T_{\partial}(V) = T(V)$ the tangent bundle of $V$, and we have the $0$-section $s_{0}:V\to T(V)$ (defined over $C_{K}$).  So for any Zariski open subset $U$ of $V$ defined over $C_{K}$, the axioms give us  $a_{1}$ in $U(K)$ such that $\partial(a_{1}) = 0$, namely $a_{1}\in U(C_{K})$. This suffices. 
\end{proof}

\begin{Remark}  Singer's theory $CODF$  introduced in \cite{Singer} is, on the face of it,  a theory in the language of ordered differential fields, but it is easy to see that it coincides with the expansion of $RCF_{D}$ by the ordering defined by $\exists z(y-x = z^{2})$. 
\end{Remark}

\medskip
We now pass to differential Galois theory. We recommend the survey paper \cite{L-S-P} as a reference (especially as the notation is similar). 
By a linear differential equation in vector form over a differential field $(K,\partial)$ we mean something of the form $\partial Y = AY$ where $Y$ is a column vector of unknowns of length $n$ and $A$ is an $n\times n$ matrix over $K$. A fundamental system of solutions of this equation in a differential field $L$ extending $K$ is by definition a set of solutions $Y_{1},.., Y_{n}$ with coefficients from $L$ which is linearly independent over $C_{L}$. This equivalent to the $n\times n$ matrix whose columns are $Y_{1},..,Y_{n}$, being nonsingular (i.e. nonzero determinant).  So a fundamental system is precisely a solution to $\partial Z = AZ$ where $Z$ is an unknown $n\times n$ matrix in $GL_{n}$.

A {\em  Picard-Vessiot extension} of $K$ for the equation is by definition a differential field extension $L$ of $K$ which is generated over $K$ by a fundamental system of solutions, and such that $C_{L} = C_{K}$.

A  generalization of linear DE's and the Picard-Vessiot theory is Kolchin's strongly normal theory (appearing in the book \cite{Kolchin} for example). The group $GL_{n}$ is replaced by an arbitrary connected algebraic group $G$ over the constants $C_{K}$ of a differential field $K$.  The equation $\partial Z = AZ$ on $GL_{n}$ is replaced by $\partial z\cdot z^{-1} = a$, where $z$ ranges over $G$, $a\in LG(K)$,  and the product $\partial z \cdot z^{-1}$ is in the sense of the tangent bundle $TG$ of $G$ (also an algebraic group). Here $LG$ is the Lie algebra of $G$. When $G = GL_{n}$, $\partial z\cdot z^{-1}$ is precisely the product $(\partial Z) Z^{-1}$ of $n\times n$ matrices, so an equation $\partial z\cdot z^{-1} = A$  is precisely $\partial Z  = AZ$. 

In any case we write  $dlog_{G}(z)$ for the  map from $G$ to its Lie algebra, taking $z$ to $\partial z \cdot z^{-1}$. A  {\em strongly normal extension} of $K$ for a logarithmic differential equation $dlog_{G}(z) = a$ on $G$ over $K$ is by definition a differential field extension $L$ of $K$ generated over $K$ by a solution $g\in G(L)$ of the equation and with no new constants.  So when $G = GL_n$ this is precisely a Picard-Vessiot extension.

When $C_{K}$ is algebraically closed, it is well-known that strongly normal extensions of $K$ (for a given logarithmic differential equation over $K$) exist and are unique up to isomorphism over $K$ as differential fields.

Building on and generalizing work in the Picard-Vessiot case (\cite{GGO}, \cite{CHvdP}),  the following was proved in \cite{KP}.
\begin{Fact} Suppose that $K$ is a differential field, $G$ a connected algebraic group over $C_{K}$, and 
\begin{center}
 $dlog_{G}(z) = a$  (*)
\end{center}
is a logarithmic differential equation on $G$ over $K$. Then
\begin{enumerate}
\item  Suppose that $C_{K}$ is existentially closed in $K$ as fields. Then there exists a strongly normal extension of $K$ for (*). 
\item Suppose in addition that $C_{K}$ is large and bounded. Then there is a strongly normal extension $L$ of $K$ for (*) such that $C_{K}$ is existentially closed in $L$ as fields.
\item Suppose in the context of 2 that $L_{1}$, $L_{2}$ are strongly normal extensions of $K$ for (*) and that there are field embeddings over $K$ of $L_{1}$, $L_{2}$ respectively into a field $L$ such that $C_{K}$ is existentially closed in $L$. Then $L_{1}$ and $L_{2}$ are isomorphic over $K$ as differential fields. 
\end{enumerate}
\end{Fact}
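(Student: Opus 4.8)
\medskip
\noindent
\textbf{Proof proposal.} I would run all three parts inside a fixed saturated differentially closed field $(\mathcal U,\partial)\models DCF_{0}$ with $K\leq\mathcal U$, writing $\mathcal C=C_{\mathcal U}$ for its (algebraically closed) field of constants, and leaning on Pillay's model-theoretic Galois theory of logarithmic differential equations. The solution set $X=\{z\in G(\mathcal U):dlog_{G}(z)=a\}$ is nonempty (as $\mathcal U$ is existentially closed as a differential field), and since any two solutions of $(*)$ differ on the right by a constant point of $G$, $X$ is a $K$-definable torsor under $G(\mathcal C)$, in particular almost internal to the constants.

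For (1): Pillay's theory attaches to $X$ a binding group, an algebraic group $\mathcal H$ together with a $K$-definable torsor structure on $X$ over $\mathcal H(\mathcal C)$. The hypothesis that $C_{K}$ is existentially closed in $K$ — which forces $C_{K}$ to be relatively algebraically closed in $K$ — is used precisely to ensure that $\mathcal H$ and this torsor structure are defined over $C_{K}$. Taking any $g\in X$, one gets $K\langle g\rangle=K(g)$ (the equation expresses the derivatives of the coordinates of $g$ rationally in $g$), transcendence degree of $K(g)$ over $K$ equal to $\dim\mathcal H$, and $C_{K(g)}=C_{K}$ (no constant beyond $C_{K}$ is encoded by the type of $g$ over $K$, since $X$ is a single $\mathcal H$-torsor over $K$). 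Then $L:=K(g)$ is a strongly normal extension of $K$ for $(*)$.

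For (2): now I would use that $C_{K}$ is large and bounded to refine the choice of $g$ so that, in addition, $C_{K}$ is existentially closed in $L$. The strongly normal extensions of $K$ for $(*)$ correspond to sub-torsors of $X$ under $C_{K}$-algebraic subgroups $\mathcal H'$ of $\mathcal H$, and the isomorphism type over $K$ of the associated $L$ is governed by the class of that torsor in $H^{1}(K,\mathcal H')$; boundedness of $C_{K}$ makes the relevant cohomology sets over $C_{K}$, and the fibres occurring, finite, so that one may pick an extension whose torsor is already ``defined over $C_{K}$'', i.e.\ such that $L$ is, over $K$, the function field of a base change $W\times_{C_{K}}K$ of a geometrically integral $C_{K}$-variety $W$ (with a smooth $C_{K}$-point, $W$ being a torsor). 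Then $C_{K}$ is existentially closed in $C_{K}(W)$ by largeness (Remark 1.2(5)), and $C_{K}(W)$ is existentially closed in $C_{K}(W)\otimes_{C_{K}}K\subseteq L$ because free base change preserves existential closedness; composing gives $C_{K}$ existentially closed in $L$. Arranging this extra property while keeping $C_{L}=C_{K}$ — which is exactly where largeness and boundedness are indispensable — is, I expect, the main obstacle.

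For (3): let $L_{1}=K\langle g_{1}\rangle$ and $L_{2}=K\langle g_{2}\rangle$ be strongly normal extensions for $(*)$ with field embeddings over $K$ into a field $L$ in which $C_{K}$ is existentially closed (and, as in (2), with $C_{K}$ large and bounded), and identify $L_{1},L_{2}$ with their images. The goal is to amalgamate them into a single differential field. The multiplication map $L_{1}\otimes_{K}L_{2}\to L$ has image a domain $R$ with $\operatorname{Frac}(R)=L_{1}L_{2}\subseteq L$, so $C_{K}$ is existentially closed in $L_{1}L_{2}$ too; the plan is to show that the kernel of this map is, or can be replaced by, a differential prime ideal — this is where the hypothesis that $C_{K}$ is existentially closed in $L$ together with the largeness and boundedness of $C_{K}$ enter, essentially to secure enough regularity over $C_{K}$ for the tensor-product derivation to descend — so that $L_{1}L_{2}$ carries a derivation $D$ restricting to the given derivations on $L_{1}$ and on $L_{2}$. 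The field of constants of $(L_{1}L_{2},D)$ is then still $C_{K}$: it is relatively algebraically closed there, ruling out new algebraic constants, and a new transcendental constant is impossible since $D$ restricts to the no-new-constant derivations of $L_{1}$ and $L_{2}$, which generate $L_{1}L_{2}$. Both $g_{1}$ and $g_{2}$ now solve $(*)$ in $(L_{1}L_{2},D)$, so $g_{2}=g_{1}c$ for some $c\in G(C_{K})\subseteq G(K)$, whence $L_{2}=K\langle g_{1}c\rangle=K\langle g_{1}\rangle=L_{1}$; this equality is a differential isomorphism $L_{1}\cong L_{2}$ over $K$, as required.
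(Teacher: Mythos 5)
The paper does not prove this statement: it is quoted as Fact 1.9 from Kamensky--Pillay \cite{KP} (building on \cite{CHvdP}, \cite{GGO}), and the introduction explicitly describes these as ``relatively hard abstract existence statements'' that the present paper uses as a black box. So there is no in-paper proof to compare yours against line by line; I can only assess your argument on its own terms, and as it stands it has genuine gaps in all three parts. The most serious is in (1): the claim that for \emph{any} $g\in X$ one gets $C_{K(g)}=C_{K}$ is false. Take $G=\mathbb{G}_a$ and $a=0$, so that $(*)$ is $\partial z=0$ and $X$ is the full constant field of the ambient saturated model; a generic solution is a new transcendental constant, $K(g)$ then has new constants, and the strongly normal extension is $K$ itself. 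The entire content of 1.9.1 is to locate a \emph{special} solution --- a generic point of a suitable $K$-definable subtorsor whose binding group descends to $C_{K}$ --- and proving that such a subtorsor exists is exactly where the hypothesis that $C_{K}$ is existentially closed in $K$ does real work (via the descent/interpretation machinery of \cite{KP}, or explicit Tannakian algebra in \cite{CHvdP}). You assert that descent in one clause and then do not use it when choosing $g$.

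Part (2) you acknowledge yourself is incomplete (``the main obstacle''), and indeed the simultaneous control of $C_{L}=C_{K}$ and of existential closedness --- using finiteness of the relevant $H^{1}(C_{K},-)$ from boundedness and Zariski-density of rational points from largeness --- is the substance of the theorem in \cite{KP}. In (3) the step that fails is the derivation on $L_{1}L_{2}$: the kernel of $L_{1}\otimes_{K}L_{2}\to L$ is prime but there is no reason for it to be a differential ideal, since $L$ carries no derivation; and if you ``replace'' it by some other differential prime ideal you sever the connection with $L$ and hence lose the only place where the hypothesis that $C_{K}$ is existentially closed in $L$ could enter. The standard route instead works with the constant ring $D=(L_{1}\otimes_{K}L_{2})^{\partial}$, a finitely generated $C_{K}$-algebra which embeds into $L$; existential closedness of $C_{K}$ in $L$ then yields a $C_{K}$-rational point of $D$, and such a point is precisely a differential $K$-isomorphism $L_{1}\cong L_{2}$. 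Your concluding computation $g_{2}=g_{1}c$ with $c\in G(C_{K})$ is fine once one is inside a common no-new-constants differential extension, but getting there is the point.
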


\begin{Remark} \begin{enumerate}\item Let $k$ be any field (of characteristic $0$). Noting that $k$ is existentially closed in the field $k(x)$ of rational functions over $k$, it follows that 1 above applies to the differential field $(K,d/dx)$, where $K = k(x)$. 
\item As pointed out to us by Omar Leon-Sanchez, in Fact 1.9, 2 and 3 above we can  drop the assumption that $C_{K}$ is large when dealing with linear differential equations and Picard-Vessiot extensions, basically because the set of $k$-points of a connected {\em linear}  algebraic group over $k$ is always Zariski-dense.
\end{enumerate}
\end{Remark}

\section{Main results}
In this section we will prove the main theorem of the paper:
\begin{Theorem} Let $T$ be a theory of large, bounded fields with almost quantifier elimination (in the language $\cal L$ of unitary rings possibly with constants).  Let $({\cal U}, \partial)$ be a model of $T_{D}$, and 
let $K$ be a differential subfield of $\cal U$, such that the field $C_{K}$ of constants of $K$ is a model of $T$. 
Let $dlog_{G}(z) = a$ be a logarithmic differential equation over $K$ (with respect to a connected algebraic group $G$ over $C_{K}$). 
 Then we can find a strongly normal extension $L$ of $K$ for the equation which is a differential subfield of $\cal U$. Moreover any two such $L$'s are isomorphic over $K$ as differential fields. 
\end{Theorem}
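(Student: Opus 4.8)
The plan is to split the theorem into a soft uniqueness part and an existence part that reduces Fact 1.9 to an embedding problem. First I would record a preliminary used throughout: $C_K$ is existentially closed, as a field, in $\mathcal U$. Indeed $C_K\models T$ by hypothesis, $C_{\mathcal U}\models T$ by Corollary 1.7, and since almost quantifier elimination makes $T$ model complete we get $C_K\preceq C_{\mathcal U}\preceq\mathcal U$ as fields (note $C_K=K\cap C_{\mathcal U}\subseteq C_{\mathcal U}$); in particular $C_K$ is existentially closed in $K$, and in every differential subfield of $\mathcal U$ containing $K$. Uniqueness is then immediate from Fact 1.9(3): if $L$ and $L'$ are strongly normal extensions of $K$ for the equation with $K\leq L,L'\leq\mathcal U$, they are literally subfields of the field $\mathcal U$, in which $C_K$ is existentially closed, so $L\cong L'$ over $K$ as differential fields.

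For existence, since $C_K$ is existentially closed in $K$ and $C_K\models T$ is large and bounded, Fact 1.9(2) provides a strongly normal extension $L_1$ of $K$ for the equation with $C_K$ existentially closed, as a field, in $L_1$. Here $C_{L_1}=C_K$, which in characteristic $0$ is relatively algebraically closed in $L_1$, and $L_1=K\langle g\rangle=K(g)$ for a solution $g\in G(L_1)$ of $dlog_G(z)=a$; the derivation on $K(g)$ is forced, $\partial g$ being $s(g)$ for a $K$-rational section $s$ of $T_\partial G\to G$ determined by $a$. So the task is to embed $(L_1,\partial)$ over $K$ into $(\mathcal U,\partial)$, equivalently to realise the quantifier-free $\mathcal L_\partial$-type of $g$ over $K$ inside $\mathcal U$: a realisation $g'$ gives $K(g')\cong_K K(g)=L_1$ as differential fields.

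To realise that type I would: (a) put $L_1$ inside a model of $T_D$ --- as $C_K$ is existentially closed in $L_1$ there is a field $M\supseteq L_1$ with $C_K\preceq M$; extend $\partial$ to a derivation of $M$; then $(M,\partial)$ is a model of $T$ together with the statement that $\partial$ is a derivation, hence embeds over $L_1$ into some $(N,\partial)\models T_D$, so that $N$ and $\mathcal U$ are two models of $T_D$ each containing $(K,\partial)$; (b) pass to the relative algebraic closure $\bar K$ of $K$ in $\mathcal U$ (a differential subfield) and, using that $T_D$ inherits almost quantifier elimination from $T$, transfer the problem over $\bar K$ --- over $\bar K$ the locus $W$ of $g$ becomes absolutely irreducible and $\partial$-invariant, with a $\bar K$-rational section coming from the equation, and (using largeness of $C_K$, so that the relevant torsor has Zariski-dense rational points, together with $C_K$ existentially closed in $L_1$) it carries a nonsingular $\bar K$-point; (c) apply the geometric axioms of Lemma 1.6 to the nonempty Zariski-open subsets of $W$ over $\bar K$ to see that the partial type ``$x$ is a generic point of $W$ and $\partial x=s(x)$'' is finitely satisfiable in $\mathcal U$, realise it by some $g'\in\mathcal U$, and restrict the resulting differential isomorphism $\bar K(g')\cong_{\bar K}\bar K(g)$ to obtain the desired copy of $L_1$ inside $\mathcal U$.

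The main obstacle is exactly steps (b)--(c): one must ensure that the solution $g'$ produced in $\mathcal U$ is \emph{generic} over $\bar K$ --- equivalently, that the copy $K(g')\leq\mathcal U$ really has \emph{no new constants} and is honestly isomorphic to the abstract strongly normal extension $L_1$ --- rather than merely \emph{some} solution of $dlog_G(z)=a$ in $\mathcal U$, which the axioms of $T_D$ hand over at once but which in general generates new constants. This is where the hard content of Fact 1.9 (the no-new-constants clause, and the existential closedness of $C_K$ in $L_1$) must be fed into the geometric axioms, and where one realises a type rather than a single formula; to land inside the \emph{given} $\mathcal U$ rather than in a saturated elementary extension, I expect one needs the further point that, once taken over $\bar K$, this type is determined by $T_D$ together with finitely much quantifier-free data --- hence isolated --- so that it is realised in every model of $T_D$ containing $\bar K$. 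Establishing that isolation, together with the bookkeeping around algebraic extensions of $K$ and the existence of the required nonsingular $\bar K$-point on $W$, should be the bulk of the work.
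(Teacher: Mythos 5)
Your overall architecture matches the paper's: reduce to the case where $K$ is relatively algebraically closed in $\mathcal U$ (checking $C_{\bar K}=C_K$), obtain an abstract strongly normal extension $L_1=K(g)$ with $C_K$ existentially closed in $L_1$ from Fact 1.9(2), extend $(L_1,\partial)$ to a model $(N,\partial)$ of $T_D$, invoke almost quantifier elimination of $T_D$ (Lemma 2.3) to compare $N$ and $\mathcal U$ over $\bar K$, and then try to copy $g$ into $\mathcal U$. Your uniqueness argument via Fact 1.9(3), together with the observation that $C_K\preceq C_{\mathcal U}\preceq\mathcal U$ as fields, is exactly the paper's.

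The gap is in your steps (b)--(c), and you have named it yourself without closing it: finite satisfiability of the partial type ``$x$ is a generic point of $W$ over $\bar K$ and $\partial x=s(x)$'' only yields a realization in a saturated elementary extension of $\mathcal U$, not in the given $\mathcal U$, and you defer the required isolation statement as ``the bulk of the work.'' The paper closes this point differently and more cheaply: the quantifier-free $\mathcal L_\partial$-type of the generator $g$ of a strongly normal extension over $K$ is \emph{already known} to be isolated by a single formula $\phi(y)$ (Lemma 2.2 of \cite{L-S-P}; alternatively, $L_1$ embeds into a differential closure of $K$, which is atomic over $K$), with the stronger property that any realization $\alpha$ of $\phi$ in \emph{any} differential field extension of $K$ satisfies $K(\alpha)\cong_K L_1$ as differential fields --- in particular $K(\alpha)$ has no new constants. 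Then almost quantifier elimination of $T_D$ upgrades ``$\bar K$ is relatively algebraically closed in $\mathcal U$'' to full elementary equivalence of $(\mathcal U,\bar K)$ and $(N,\bar K)$, and one simply transfers the single sentence $\exists y\,\phi(y)$ from $N$ to $\mathcal U$; no further appeal to the geometric axioms, to largeness of $C_K$, or to a nonsingular $\bar K$-point of $W$ is needed at that stage. As written, your argument does not prove the decisive step --- that the solution produced in $\mathcal U$ is generic over $\bar K$ and generates no new constants --- so the proof is incomplete precisely where the theorem's content lies.
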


\begin{Remark} \begin{enumerate} \item The fact that the  equation $dlog_{G}(z) = a$ has a solution in $G(\cal U)$ is an immediate consequence of the axioms in Lemma 1.6. 
The main point of Theorem 2.1  is that there is a solution $g$ in ${\cal U}$ such that $K(g)$ has no new constants. 
\item Note that a special case of the theorem is when $T = ACF_{0}$ in which case $T_{D} = DCF_{0}$. But as mentioned in the introduction this is known directly.
\item  Let us mention roles played by the various hypotheses in Theorem 2.1. Largeness and boundedness are the assumptions on the field of constants in Fact 1.9, 2, which yield  a strongly normal extension $L$  of $K$ such that $C_{K}$ is existentially closed in $L$  Largeness is also needed for the existence of the model companion $T_{D}$. Almost quantifier elimination of $T$ is used (in Lemma 2.3 below) to obtain almost quantifier elimination of $T_{D}$, which after replacing $K$ by its relative algebraic closure inside $\cal U$, allows us to find $L$ inside $\cal U$. 
\end{enumerate}
\end{Remark}

The following lemma will be an important ingredient.
\begin{Lemma} Suppose that $T$ is a theory of large fields and has almost quantifier elimination. Then  $T_{D}$ has almost quantifier elimination (see Definition 1.5)
\end{Lemma}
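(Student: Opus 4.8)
The plan is to reduce almost quantifier elimination of $T_D$ to that of $T$ by the standard two-model criterion. Suppose $(K_1,\partial)$ and $(K_2,\partial)$ are models of $T_D$, and $A$ is a differential subfield that embeds (as a differential field) into both, with $A$ relatively algebraically closed in each $K_i$ as a field and closed under $\partial$; we must show that $\mathrm{qftp}(A)$ together with $T_D$ determines a complete type, i.e. that $(K_1,\partial)$ and $(K_2,\partial)$ realize the same complete type over $A$. By model-completeness of $T_D$ (which follows from Lemma 1.6, since a model companion is model-complete), it suffices to find a common differential field extension: embed $K_1$ and $K_2$ over $A$ into a single model $({\cal U},\partial)$ of $T_D$; then $K_1 \prec {\cal U} \succ K_2$ gives $K_1 \equiv_A K_2$.

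The key step is therefore: given the differential field $A$ (relatively algebraically closed and $\partial$-closed in each $K_i$), produce a model of $T_D$ containing both $K_1$ and $K_2$ amalgamated over $A$. First I would use almost quantifier elimination of $T$ at the level of \emph{fields}: since $A$ is relatively algebraically closed in $K_1$ and in $K_2$ and $T$ has almost QE, $\mathrm{qftp}_{\cal L}(A)$ decides the complete $\cal L$-type, so $K_1 \equiv_A K_2$ as fields, hence (again using $\cal L$-model-completeness, or more directly the defining property of almost QE applied to the common field $A$) $K_1$ and $K_2$ can be amalgamated over $A$ into a single field $F$ with $K_1, K_2$ existentially closed — indeed elementary — substructures of $F$ as fields. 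Next, one must put a derivation on $F$ extending $\partial$ on $K_1$ and on $K_2$ simultaneously: this requires choosing the amalgam $F$ so that $K_1$ and $K_2$ are \emph{freely} amalgamated over $A$ as fields (linearly disjoint over $A$), which one can arrange since $A$ is relatively algebraically closed in each; then the two derivations, which agree on $A = K_1 \cap K_2$, extend uniquely to a derivation on $K_1 \otimes_A K_2$ and hence to the compositum, and finally to all of $F$. Lastly, embed $(F,\partial)$ into a model $({\cal U},\partial)$ of $T_D$ (possible since $T_D$ is the model companion, so every model of $T_\forall \cup \{\text{``}\partial\text{ is a derivation"}\}$ embeds in one). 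Then $K_i \subseteq F \subseteq {\cal U}$ with $K_i \prec {\cal U}$ as fields and model-completeness of $T_D$ upgrading this to ${\cal L}_\partial$, giving $K_1 \equiv_A K_2$ in ${\cal L}_\partial$, as required.

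The main obstacle I expect is the simultaneous extension of the derivation, i.e. ensuring the field amalgam can be taken with $K_1$ and $K_2$ linearly disjoint over $A$ so that the two derivations glue. The point is that relative algebraic closedness of $A$ in each $K_i$ (together with characteristic $0$, so everything is separable) is exactly what guarantees that one may realize $\mathrm{qftp}_{\cal L}(K_1/A)$ in an elementary extension of $K_2$ in a way that is free over $A$ — equivalently, that $K_1 \otimes_A K_2$ is a domain — so that uniqueness of derivation extensions on a field extension generated by algebraically independent elements (and then by separable algebraic elements) applies on each tensor factor compatibly. One should double-check that the almost-QE hypothesis on $T$ is genuinely needed here rather than mere model-completeness: it is what lets us conclude $K_1 \equiv_A K_2$ as $\cal L$-structures from the \emph{quantifier-free} data on $A$ alone, which is the form in which the hypothesis on $A$ in Definition 1.5 is stated, and it is also what makes the whole argument bottom out in a genuine completeness statement rather than just an embeddability statement.
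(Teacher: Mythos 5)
Your overall strategy is genuinely different from the paper's: the paper fixes saturated models, runs a back-and-forth one quantifier-free ${\cal L}_{\partial}$-formula at a time by passing to the prolongation $(d,\partial d,\ldots,\partial^{(r)}d)$, invoking the Pierce--Pillay section construction on $T_{\partial}(W)$, and then applying the geometric axioms of $T_{D}$ from Lemma 1.6 inside the second model. You instead propose a global amalgamation of the two models over $A$ followed by an appeal to model-completeness of $T_{D}$. That route can be made to work, but as written it has a gap at its central step.

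The gap is this: you assert that $K_{1}$ and $K_{2}$ can be amalgamated over $A$ into a field $F$ in which both are elementary (or at least existentially closed) substructures \emph{and} linearly disjoint over $A$, and you justify this by relative algebraic closedness of $A$ alone, saying it is ``equivalent'' to $K_{1}\otimes_{A}K_{2}$ being a domain. These are not equivalent. Regularity of $K_{i}/A$ (relative algebraic closedness plus characteristic $0$) gives only that $K_{1}\otimes_{A}K_{2}$ is a domain, i.e.\ that the free amalgam $F=\mathrm{Frac}(K_{1}\otimes_{A}K_{2})$ exists as a field carrying the glued derivation. It does \emph{not} give that $K_{1}$ is existentially closed in $F$ as a field --- and you need something of that strength, since otherwise there is no reason for $F$ to be a model of $T_{\forall}$, hence no reason for $(F,\partial)$ to embed into a model of $T_{D}$, and your final appeal to model-completeness of $T_{D}$ has nothing to apply to. The missing input is largeness, which you never invoke although it is a hypothesis of the lemma: for a finite tuple $\bar c$ from $K_{2}$, its locus $V$ over $A$ is absolutely irreducible (by relative algebraic closedness of $A$) and has a smooth $K_{1}$-point because $K_{1}\equiv_{A}K_{2}$ as fields (transfer the first-order statement that the smooth locus has a point); largeness of $K_{1}$ then makes $V(K_{1})$ Zariski dense, so by Remark 1.2.5 $K_{1}$ is existentially closed in $\mathrm{Frac}(K_{1}\otimes_{A}A(\bar c))$, and taking the direct limit over $\bar c$ gives $K_{1}$ existentially closed in $F$, whence $F\models T_{\forall}$. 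With that inserted --- together with the observation, which the paper makes via almost quantifier elimination of $T$, that an arbitrary realization of the quantifier-free type of $\bar a$ in a model of $T_{D}$ is automatically relatively algebraically closed, not just the particular $A$ you started with --- your argument goes through.
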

\begin{proof} Let ${\bar a}$ be a infinite tuple in a model $K$ of $T_{D}$ which enumerates a relatively algebraically closed (in the field sense) differential subfield of $K$, and let $p({\bar x})$ be the quantifier-free type of ${\bar a}$. We show that $p({\bar x})$ axiomatizes a complete type modulo $T_{D}$ by a standard back-and-forth argument inside saturated models. 

So let $K_{1}$ and $K_{2}$ be saturated models of $T_{D}$ and ${\bar b}$, ${\bar c}$ realizations of $p({\bar x})$ in $K_{1}, K_{2}$ respectively.
As $T$ has almost quantifier-elimination and the $\cal L$-reducts of $K_{1}$, $K_{2}$ are models of $T$ it follows that 
\newline
(*) ${\bar b}$ and ${\bar c}$ have the same $\cal L$-type, and moreover each  is relatively algebraically closed in $K_{1}$, respectively, $K_{2}$. 

\vspace{2mm}
\noindent
Now let $d$ be an element of $K_{1}$ and let ${\bar d}$ be an enumeration of  the relative (field-theoretic) algebraic closure in $K_{1}$ of the differential field generated by ${\bar b}$ and $d$.  For the back-and-forth argument to work it will suffice (by symmetry) to find ${\bar e}$ in $K_{2}$ such that the partial $L_{\partial}$-isomorphism taking ${\bar b}$ to ${\bar c}$ extends to one taking ${\bar d}$ to ${\bar e}$.  And for this it will be enough (by saturation of $K_{2}$) to realize in $K_{2}$ any finite part of the copy over ${\bar c}$ of 
the quantifier-free ${\cal L}_{\partial}$-type of ${\bar d}$ over ${\bar b}$. 

Hence we have reduced the argument to showing the following (where $d$ has now a different meaning):
\newline
{\em Claim.}  Let $\phi(x)$ be a quantifier-free ${\cal L}_{\partial}$ formula over ${\bar b}$ which is realized in $K_{1}$ by a finite tuple $d$.  Then the copy of this formula over $\bar c$ is realized in $K_{2}$. 
\newline
{\em Proof of claim.}  This is an adaptation to the current context of a well-known argument (see the proof of Proposition  5.6 in \cite{Pillay-Polkowska}).
The formula $\phi(x)$ is of the form $\psi(x,\partial x,...,\partial^{(r)} x)$ for some $r$ and quantifier-free $\cal L$-formula $\psi$ over ${\bar b}$.  Let $d_{1} = (d,\partial (d),..,\partial^{(r)}(d))$, and let $e_{1} = \partial(d_{1})$.  Let $V_{1}$ be the algebraic variety over $\bar b$ whose generic point is $d_{1}$. As ${\bar b}$ is relatively algebraically closed in $K_{1}$, $V_{1}$ is absolutely irreducible. Moreover $(d_{1},e_{1})\in T_{\partial}(V_{1})$.  Likewise if $W_{1}$ is the variety over $\bar b$ whose generic point is $(d_{1},e_{1})$, then $W_{1}$ is absolutely irreducible. 
By Corollary 1.7 of \cite{Pierce-Pillay} there is $f_{1}$ rational over ${\bar b},d_{1}, e_{1}$ such that $((d_{1},e_{1}),(e_{1},f_{1}))\in T_{\partial}(W_{1})$.  So we can write $(e_{1},f_{1}) = s_{1}(d_{1},e_{1})$ for some $\bar b$-rational section $s_{1}$ of the projection $\pi: T_{\partial}(W_{1})\to W_{1}$. 

Now from (*)  ${\bar b}$ and ${\bar c}$ have the same ${\cal L}$-type in $K_{1}$, $K_{2}$ respectively.  So without the loss of generality the $\cal L$-elementary map $h:{\bar b}\to {\bar c}$ extends to an isomorphism (of fields) which we also call $h:K_{1}\cong K_{2}$.  We let $V_{2},W_{2},s_{2}, d_{2}, e_{2}, f_{2}$ be the images of $V_{1}$ etc.  under $h$. Then $(d_{2},e_{2})$ is a generic point of $W_{2}$ over $\bar c$, $s_{2}$ is a $\bar c$-rational section of the projection $T_{\partial}(W_{2})\to W_{2}$, and $s_{2}(d_{2},e_{2})  = (e_{2},f_{2})$. Hence the axioms for $T_{D}$ from Lemma 1.6, together with saturation of $K_{2}$, imply that there is a generic point $(d_{3},e_{3})$ of $W_{2}$ over $\bar c$, such that $s_{3}(d_{3},e_{3}) = \partial((d_{3},e_{3}))$. But note that  $s_{3}(d_{3},e_{3})$ is of the form $(e_{3},f_{3})$, which implies that $e_{3} = \partial(d_{3})$. 

The upshot is that the $L$-type of $(d_{3},e_{3})$ over $\bar c$ is the image under $h$ of the $L$-type of $(d_{1},e_{1})$ over $\bar b$. As $\partial(d_{1}) = e_{1}$ and $\partial(d_{3}) = e_{3}$, it follows immediately that the image of $\phi(x)$ under $h$ is realized in $K_{2}$, yielding the claim, as well  as the lemma.

\end{proof} 

\begin{Remark}  Lemma 2.3 could also be obtained using  Theorem 7.2 (iii) of \cite{Tressl}  and Remark 1.4.2 above. Namely assuming $T$ to have almost quantifier elimination, add new relation symbols for the formulas $\exists y(\psi(x,y))$ appearing in Remark 1.4.2, to obtain a definitional expansion $T^{*}$ which has quantifier elimination in the new language ${\cal L}^{*}$.  Then the aforementioned result of Tressl is essentially that $T^{*}$ together with the axioms $\Sigma$ from 1.6 has quantifier elimination in ${\cal L}^{*}_{\partial}$. This translates into saying that $T_{D}$ has almost quantifier elimination, as required.   (See also \cite{Guzy-Point}.)  
\end{Remark}

\vspace{5mm}
\noindent
\begin{proof} [Proof of Theorem 2.1.]

Let $({\cal U},\partial)$ be a model of $T_{D}$, $K$ a differential subfield such that $C_{K}\models T$, and let $dlog_{G}(-) = a$ be a logaritmic differential equation over $K$ (where $G$ is a connected algebraic group over $C_{K}$).
 We want first to find a strongly normal extension $L$  of $K$ for the equation which is contained in ${\cal U}$ (equivalently embeds in ${\cal U}$ over $K$ as a differential field). 

\vspace{2mm}
\noindent
{\em Claim.} We may assume that $K$ is relatively algebraically closed in $\cal U$.
\begin{proof} [Proof of Claim.] Let $K_{1}$ be the algebraic closure  of $K$ in $\cal U$ as a field. It is clear that $K_{1}$ is also a differential subfield of $\cal U$.  Now it is well-known that 
$C_{K_{1}}$ is contained in the algebraic closure of the field $C_{K}$. (If  $a\in C_{K_{1}}$ and $P(x)$ is the minimal polynomial of $a$ over $K$, then by applying $\partial$ to $P(a)$ and using that $a$ is a constant, we see that $P$ has coefficients in $C_{K}$.)  But $C_{K}$ being an elementary substructure of $C_{\cal U}$ implies that $C_{K}$ is algebraically closed in $C_{\cal U}$. Hence we see that $C_{K_{1}} = C_{K}$.  But then a strongly normal extension of $K_{1}$ inside $\cal U$ (for the equation) gives rise to a strongly normal extension of $K$ inside $\cal U$. 
\end{proof}

\vspace{2mm}
\noindent
Now, as $C_{K}$ is a model of $T$,  it is large and bounded. Hence Fact 1.9.2 gives us a strongly normal extension $(L,\partial)$ of $(K,\partial)$ for the equation such that $C_{K}$ is existentially closed in $L$ as fields.  It follows that $(L,\partial)$ is a model of $T_{\forall} \cup \{``\partial$ is a derivation"\}. Hence $(L,\partial)$ extends to a model  $(L_{1},\partial)$ of $T_{D}$. 

Now as $K$ is  relatively algebraically closed in the model $({\cal U},\partial)$ of $T_{D}$,  by Lemma 2.3 it follows that (an enumeration of ) $K$ has the same
$L_{\partial}$-type in  $({\cal U},\partial)$ and $(L_{1},\partial)$.  In other words 
\newline 
(**) the structure $({\cal U},\partial)$ with names for elements of $K$ is elementarily equivalent to the structure $(L_{1},\partial)$ with names for elements of $K$.

Let $L = K(g)$ where $dlog_{G}(g) = a$. Then by Lemma 2.2 of \cite{L-S-P} the quantifier-free $L_{\partial}$ type of $g$ over $K$ is isolated  by a formula $\phi(y)$ say.  (One can also just use the fact that $L$ has to live inside some differential closure of $K$.) Note that if $\alpha$ is a solution of $\phi(y)$ in some differential field extension of $K$, then $K(\alpha)$ is isomorphic to $L$ over $K$ (as differential fields), in particular $K(\alpha)$ is also a strongly normal extension of $K$ for the equation.
But by (**) the formula $\exists y\phi(y)$ over $K$ is true in $\cal U$. So this gives us the required strongly normal extension of $K$ inside $\cal U$. 

\vspace{2mm}
\noindent
The uniqueness part of Theorem 2.1 follows from part 3 of Fact 1.9, as if $L_{1}$ and $L_{2}$ are both strongly normal extensions of $K$ inside $\cal U$, then we already have embeddings (as fields) of $L_{1}$ and $ L_{2}$ over $K$  into a field in which $C_{K}$ is existentially closed. 

\end{proof}

\end{document}